\documentclass[preprint,12pt,3p]{elsarticle}

\usepackage{amssymb}
\usepackage{graphicx}
\usepackage{comment}
\usepackage{pgf,tikz}

\usetikzlibrary{snakes}
\tikzstyle{printersafe}=[snake=snake,segment amplitude=0 pt]

\usetikzlibrary{arrows}
\usetikzlibrary{shapes}
\usepackage{amsmath}
\usepackage{caption}

\newtheorem{theorem}{Theorem}

\newtheorem{proposition}{Proposition}
\newtheorem{conjecture}{Conjecture}
\newtheorem{corollary}{Corollary}

\newtheorem{lemma}{Lemma}

\newenvironment{proof}{ {\bf Proof:}} {$\Box$}

\journal{Journal of Latex templates}

\begin{document}

\begin{frontmatter}

\title{Some new results on Sylvester colorings of cubic graphs}

\author[label1]{Luca Ferrarini}
\address[label1]{Universit\'{e} Sorbonne Paris Nord, LIPN, Villetaneuse, 93430, France}


\ead{ferrarini@lipn.univ-paris13.fr}

\author[label10]{Vahan Mkrtchyan\corref{cor1}}
\address[label10]{Department of Mathematical Sciences\\
Purdue University Fort Wayne\\
Fort Wayne, IN, USA-46805}

%
\cortext[cor1]{Corresponding author}
\ead{vmkrtchy@purdue.edu}


\begin{abstract}
If $G$ and $H$ are two cubic multi-graphs, then an $H$-coloring of $G$ is a mapping $f: E(G)\rightarrow E(H)$, such that for every $v\in V(G)$ there is a vertex $x\in V(H)$, such that $f(\partial_G(v))=\partial_H(x)$. If $G$ admits an $H$-coloring then it is common to write $H\prec G$. The Petersen coloring conjecture predicts that for any bridgeless cubic graph $G$ one has $P_{10}\prec G$. Here $P_{10}$ is the Petersen graph. Let $f: E(G)\rightarrow E(H)$ be any mapping. Define: $V(f)=\{v\in V(G):\exists x\in V(H), f(\partial_G(v))=\partial_H(x)\}$. Let $S_{10}$ be the smallest cubic multi-graph that has no perfect matching. It has ten vertices. Define $S_{12}$ as the cubic graph that is obtained from $S_{10}$, by replacing its unique vertex $z$ adjacent to three bridges with a triangle. In this paper we show that (1) for every cubic multi-graph $G$ with a perfect matching, there is a mapping $f:E(G)\rightarrow E(S_{12})$, such that $|V(f)|\geq \frac{4}{5}\cdot |V(G)|$, and (2) for every cubic multi-graph $G$, there is a mapping $f:E(G)\rightarrow E(S_{10})$, such that $|V(f)|\geq \frac{5}{6}\cdot |V(G)|$. Our second result improves the $\frac{4}{5}$-bound by Hakobyan and the second author from 2018.
\end{abstract}

\begin{keyword}
Cubic Graph \sep Petersen Coloring Conjecture \sep Sylvester Coloring \sep Sylvester Coloring Conjecture
\end{keyword}

\end{frontmatter}



\section{Introduction and Notations}

In this paper, we consider finite, undirected graphs. Graphs do not contain loops, though they may contain parallel edges. In the paper, we consider graphs up to isomorphisms. This implies that the equality $G=G'$ means that $G$ and $G'$ are isomorphic.

If $G$ is a graph, then let $V(G)$ and $E(G)$ be the sets of vertices and edges of $G$, respectively. Let $\partial_{G}(v)$ be the set of edges of $G$ that are incident to the vertex $v$ of $G$. A matching of $G$ is a subset of $E(G)$ such that no two of them share a vertex. A matching of $G$ is perfect, if it contains $\frac{|V(G)|}{2}$ edges. A cut-vertex of a graph is a vertex whose removal increases the number of components of the graph. A block of $G$ is a maximal $2$-connected subgraph of $G$. An end-block is a block of $G$ containing at most one cut-vertex of $G$. A subgraph $H$ of $G$ is even, if every vertex of $H$ has even degree in $H$. 

Let $G$ be a cubic graph, and let $T$ be a triangle in $G$ such that each edge of $T$ is of multiplicity one. Such triangles will be called contractable. Let $T$ be a contractable triangle in a cubic graph and $e\in E(T)$, moreover, let $f$ be the edge of $G$ that is incident to a vertex of $T$ and is not adjacent to $e$. Then, $e$ and $f$ will be called opposite edges. We will frequently consider the graph $G/T$, which is obtained from a cubic graph $G$ by replacing a contractable triangle $T$ with a vertex. We will say that $G/T$ is obtained from $G$ by contracting the triangle $T$.

If $G$ is a cubic graph containing cut-vertices, then any end-block $B$ of $G$ is adjacent to a unique bridge $e$. We will refer to $e$ as a bridge corresponding to $B$. Furthermore, if $e=(a,b)$ and $a\in V(B)$, $b\notin V(B)$, then $b$ is called the root of $B$. We call $e$ a trivial bridge if $e$ is joined to a non-contractable triangle, otherwise it is non-trivial. An alternative view of trivial bridges is the following. Let $e$ be a bridge in a connected cubic graph $G$. Let $G_1$ and $G_2$ be the two components of $G-e$. Then $e$ is a trivial bridge, if at least one of $G_1$ or $G_2$ has three vertices.


For a positive integer $t$, a $t$-factor of $G$ is a spanning $t$-regular subgraph of $G$. Notice that the set of edges of a $1$-factor of $G$ forms a perfect matching of $G$. Moreover, if $G$ is cubic and $M$ is a $1$-factor of $G$, then the set $E(G)\backslash E(M)$ is an edge-set of a $2$-factor of $G$. This $2$-factor is said to be complementary to $M$. Conversely, if $\overline{M}$ is a $2$-factor of a cubic graph $G$, then the set $E(G)\backslash E(\overline{M})$ is an edge-set of a $1$-factor of $G$ or is a perfect matching of $G$. This $1$-factor is said to be complementary to $\overline{M}$.




Let $G$ and $H$ be two cubic graphs. An $H$-coloring of $G$ is a mapping $f:E(G)\rightarrow E(H)$, such that for each vertex $x\in V(G)$ there is a vertex $y\in V(H)$, such that $f(\partial_{G}(x)) = \partial_{H}(y)$. Let $V(f)$ denote the set of vertices of $G$ that satisfy the $H$-coloring condition under $f$, and let $\overline{V(f)} = V(G) \setminus V(f)$. If $G$ admits an $H$-coloring, then we will write $H
\prec G$. It is easy to see that if $H\prec G$ and $L\prec H$, then $L\prec G$. That is, $\prec$ induces a transitive relation on the set of cubic graphs. 



Let $P_{10}$ be the Petersen graph (see Figure \ref{PetersenGraph}). An important conjecture in the area is due to Jaeger \cite{J1988,Jaeger1975,Jaeger1979,Jaeger1985}:
\begin{conjecture}\label{P10Conjecture} ($P_{10}$-conjecture) Let $G$ be a bridgeless cubic graph. Then $P_{10}\prec G$.
\end{conjecture}

\begin{figure}[ht]
  \begin{center}
		\begin{tikzpicture}[style=thick]
\draw (18:2cm) -- (90:2cm) -- (162:2cm) -- (234:2cm) --
(306:2cm) -- cycle;
\draw (18:1cm) -- (162:1cm) -- (306:1cm) -- (90:1cm) --
(234:1cm) -- cycle;
\foreach \x in {18,90,162,234,306}{
\draw (\x:1cm) -- (\x:2cm);
\draw[fill=black] (\x:2cm) circle (2pt);
\draw[fill=black] (\x:1cm) circle (2pt);
}
\end{tikzpicture}
	\end{center}
	\caption{The graph $P_{10}$.}\label{PetersenGraph}
\end{figure}

This conjecture is very strong, as it implies many classical conjectures in the area such as:
\begin{conjecture}\label{conj:BFConjecture} (Berge-Fulkerson, 1972 \cite{Fulkerson,Seymour79}) Any bridgeless
cubic graph $G$ contains six (not necessarily distinct) perfect matchings
$F_1, \ldots , F_6$ such that any edge of $G$ belongs to exactly two of them.
\end{conjecture} 


\begin{conjecture}\label{conj:5CDC}
((5, 2)-even-subgraph-cover conjecture, \cite{Celmins1984,Preiss1981}) Any bridgeless
graph $G$ (not necessarily cubic) contains five even subgraphs such that any
edge of $G$ belongs to exactly
two of them.
\end{conjecture}
\begin{conjecture}\label{conj:7over5} (\cite{AlonTarsi1985,JT1992,CQZhangBook1,CQZhangBook2}) Any bridgeless
graph $G$ (not necessarily cubic) contains a list of cycles, such that every edge of $G$ lies on at least one of them and the sum of the lengths of these cycles is at most $\frac{7}{5}\cdot |E|$.    
\end{conjecture}

The list of six perfect matchings in Conjecture \ref{conj:BFConjecture} usually is called a Berge-Fulkerson cover of $G$. If $k(G)$ is the smallest number of perfect matchings that are needed to cover the edge-set of $G$, then observe that this conjecture implies that $k(G)\leq 5$ for any bridgeless cubic graph. This weaker statement is known as Berge conjecture, which has been shown to be equivalent to Berge-Fulkerson conjecture in \cite{Mazz11}.

Motivated by the notion of Petersen coloring, it is natural to consider $H$-colorings for other choices of the cubic graph $H$. This leads to a broader question of identifying prototype graphs that may serve as universal coloring templates for some classes of cubic graphs. 

In this direction, let $S_{4}$, $S_6$, $S_{10}$ and $S_{12}$ denote the graphs on $4,6, 10$ and $12$ vertices, respectively, as depicted on Figures \ref{S4graph}, \ref{S6graph}, \ref{SylvGraph} and \ref{fig:Sylv12graph}, respectively. 
Following \cite{Schrijver} (see page 432 of volume A), $S_{10}$ will be called Sylvester graph. It is worth noting that usually the name “Sylvester graph” is used for a particular strongly regular graph on 36 vertices, and this graph should not be confused with $S_{10}$, as it has ten vertices.

\begin{figure}[ht]
\centering
\begin{minipage}[b]{.5\textwidth}
 \begin{center}
\begin{tikzpicture}[scale = 1.2]

\node at (1.8,-0.5) {$a$};
\node at (1.6,0.1) {$a_1$};
\node at (2.5,0.1) {$a_2$};
\node at (2,0.35) {$a_3$};
\node at (2,0.8) {$a_4$};

\tikzstyle{every node}=[circle,draw,fill=black,inner sep=0pt,minimum width=4pt]

\node (n4) at (2,0) {};
\node (n5) at (1.5,0.5) {};
\node (n6) at (2.5,0.5) {};
\node (c) at (2,-1) {};

\path
(n4) edge (n5)
     edge (n6)
     edge (c)
(n5) edge (n6)
     edge[bend left] (n6);

\end{tikzpicture}

\caption{The graph $S_4$.}\label{S4graph}
\end{center}
	
\end{minipage}%
\begin{minipage}[b]{.5\textwidth}
  	\begin{center}
\begin{tikzpicture}[scale = 1.2]

\node at (1.8,-0.5) {$a$};
\node at (1.6,0.1) {$a_1$};
\node at (2.5,0.1) {$a_2$};
\node at (2,0.35) {$a_3$};
\node at (2,0.8) {$a_4$};
\node at (1.6,-1.1) {$a_5$};
\node at (2.5,-1.1) {$a_6$};
\node at (2,-1.3) {$a_7$};
\node at (2,-1.8) {$a_8$};

\tikzstyle{every node}=[circle,draw,fill=black,inner sep=0pt,minimum width=4pt]

\node (n4) at (2,0) {};
\node (n5) at (1.5,0.5) {};
\node (n6) at (2.5,0.5) {};
\node (c) at (2,-1) {};
\node (n7) at (1.5,-1.5) {};
\node (n8) at (2.5,-1.5) {};

\path
(n4) edge (n5)
     edge (n6)
     edge (c)
(n5) edge (n6)
     edge[bend left] (n6)
(n8) edge (c)
(n7) edge (n8)
     edge (c)
      edge[bend right] (n8);

\end{tikzpicture}

\caption{The graph $S_6$.}\label{S6graph}
\end{center}
\end{minipage}
\end{figure}

\begin{figure}[ht]
\centering
\begin{minipage}[b]{.5\textwidth}
  
  \begin{center}
	
		\begin{tikzpicture}[scale = 1.2]

            \node at (-5.20,-1.55) {$a$};
			\node at (-5.9,-0.85) {$a_1$};
			\node at (-5,-0.85) {$a_2$};
			\node at (-5.5,-0.65) {$a_3$};
			\node at (-5.5,-0.15) {$a_4$};
			
			\node at (-3.65,-1.55) {$b$};
			\node at (-3.9,-0.85) {$b_1$};
			\node at (-3,-0.85) {$b_2$};
			\node at (-3.5,-0.65) {$b_3$};
			\node at (-3.5,-0.15) {$b_4$};
			
			\node at (-1.85,-1.55) {$c$};
			\node at (-1.9,-0.85) {$c_1$};
			\node at (-1,-0.85) {$c_2$};
			\node at (-1.5,-0.65) {$c_3$};
			\node at (-1.5,-0.15) {$c_4$};
		
			  \tikzstyle{every node}=[circle, draw, fill=black!50,
                       inner sep=0pt, minimum width=4pt]
																		
			\node[circle,fill=black,draw] at (-5.5,-1) (n1) {};
			
			
			
			

			\node[circle,fill=black,draw] at (-6, -0.5) (n2) {};
																				
			\node[circle,fill=black,draw] at (-5,-0.5) (n3) {};
																				
			\node[circle,fill=black,draw] at (-3.5,-1) (n4) {};
																				
			\node[circle,fill=black,draw] at (-4, -0.5) (n5) {};
																				
			\node[circle,fill=black,draw] at (-3,-0.5) (n6) {};
																				
			\node[circle,fill=black,draw] at (-1.5,-1) (n7) {};
																				
			\node[circle,fill=black,draw] at (-2, -0.5) (n8) {};
																				
			\node[circle,fill=black,draw] at (-1,-0.5) (n9) {};
																				
			\node[circle,fill=black,draw] at (-3.5,-2.5) (n10) {};

			\path[every node]
			(n1) edge  (n2)

			edge  (n3)
			edge (n10) 
														 	
			(n2) edge (n3)
			edge [bend left] (n3)
															     
			(n3) 
			(n4) edge (n5)
			edge (n6)
			edge (n10)
																		    
			(n5) edge (n6)
			edge [bend left] (n6)
			(n6)
																			 
			(n7) edge (n8)
			edge (n9)
			edge (n10)
																			  
			(n8) edge (n9)
			edge [bend left] (n9)
			;
		\end{tikzpicture}
																
	\end{center}
								
	\caption{The graph $S_{10}$.}
	\label{SylvGraph}
\end{minipage}
\begin{minipage}[b]{.5\textwidth}
\centering
  \begin{center}

		\begin{tikzpicture}[scale = 1.2]
			
			\node at (-5.65,-1.55) {$a$};
			\node at (-5.9,-0.85) {$a_1$};
			\node at (-5,-0.85) {$a_2$};
			\node at (-5.5,-0.65) {$a_3$};
			\node at (-5.5,-0.15) {$a_4$};
			
			\node at (-3.65,-1.55) {$b$};
			\node at (-3.9,-0.85) {$b_1$};
			\node at (-3,-0.85) {$b_2$};
			\node at (-3.5,-0.65) {$b_3$};
			\node at (-3.5,-0.15) {$b_4$};
			
			\node at (-1.65,-1.55) {$c$};
			\node at (-1.9,-0.85) {$c_1$};
			\node at (-1,-0.85) {$c_2$};
			\node at (-1.5,-0.65) {$c_3$};
			\node at (-1.5,-0.15) {$c_4$};
			
			\node at (-4.65,-1.75) {$z$};
			\node at (-2.65,-1.75) {$x$};
			\node at (-3.6,-2.4) {$y$};
			
			\tikzstyle{every node}=[circle, draw, fill=black!50,
                        inner sep=0pt, minimum width=4pt]
																								
			\node[circle,fill=black,draw] at (-5.5,-1) (n1) {};
																								
			\node[circle,fill=black,draw] at (-6, -0.5) (n2) {};
																								
			\node[circle,fill=black,draw] at (-5,-0.5) (n3) {};
																								
			\node[circle,fill=black,draw] at (-3.5,-1) (n4) {};
																								
			\node[circle,fill=black,draw] at (-4, -0.5) (n5) {};
																								
			\node[circle,fill=black,draw] at (-3,-0.5) (n6) {};
																								
			\node[circle,fill=black,draw] at (-1.5,-1) (n7) {};
																								
			\node[circle,fill=black,draw] at (-2, -0.5) (n8) {};
																								
			\node[circle,fill=black,draw] at (-1,-0.5) (n9) {};
																								
			\node[circle,fill=black,draw] at (-5.5,-2) (n10) {};
																								
			\node[circle,fill=black,draw] at (-3.5,-2) (n11) {};
																								 
			\node[circle,fill=black,draw] at (-1.5,-2) (n12) {};
																								
			\path[every node]
			(n1) edge  (n2)
																								    
			edge  (n3)
			edge (n10)
																								   	
			(n2) edge (n3)
			edge [bend left] (n3)
																								       
			(n3) 
			(n4) edge (n5)
			edge (n6)
			edge (n11)
																								    
			(n5) edge (n6)
			edge [bend left] (n6)
			(n6)
																								   
			(n7) edge (n8)
			edge (n9)
			edge (n12)
																								    
			(n8) edge (n9)
			edge [bend left] (n9)
																								   
			(n10) edge (n11)
			edge (n12)
																								   
			(n10) edge [bend right] (n12)
																								  
			;
		\end{tikzpicture}
																
	\end{center}
	
	\caption{The graph $S_{12}$.}\label{fig:Sylv12graph}
\end{minipage}
\end{figure}

In \cite{AuJC2018,HM2019,Mkrt2013} it is conjectured that any cubic graph $G$ admits an $S_{10}$-coloring, and any cubic graph $G$ with a perfect matching admits an $S_{12}$-coloring. These two conjectures have been disproved recently in \cite{WolfAMC2026}, where the author has given an example of a cubic graph with a perfect matching that does not admit an $S_{10}$-coloring (hence it cannot admit an $S_{12}$-coloring). On the positive side, in \cite{KMZ22} it is shown that any bridgeless cubic graph admits an $S_6$-coloring (see Theorem \ref{thm:KMZJCTB} below). Note that this is the same as admitting an $S_4$-coloring.







In this paper, we establish the following two results. First, we show that for any cubic graph $G$ with a perfect matching there is a mapping $f:E(G)\rightarrow E(S_{12})$ such that $|V(f)| \geq \frac{4}{5}\cdot |V(G)|$. Then we show that for any cubic graph $G$ there is a mapping $f: E(G) \longrightarrow E(S_{10})$ such that $|V(f)| \geq \frac{5}{6}\cdot |V(G)|$. This improves the $\frac{4}{5}\cdot |V(G)|$-bound proved in \cite{AuJC2018}. Non-defined concepts can be found in \cite{west:1996}.



\section{Auxiliary Results}\label{sec:aux}

In this section we introduce some auxiliary results which turn out to be useful for deriving our main results. Let $G$ be a graph of maximum degree at most three, and let $c$ be a proper coloring of a subset of $E(G)$ using three colors $\{1,2,3\}$. An edge in $G$ is defined as \textit{uncolored} if it has not received  a color under $c$. Assume that $c$ is chosen such that the number of uncolored edges of $G$ is the smallest. Take an arbitrary uncolored edge $e=uv$. Since $e$ is uncolored, the edges adjacent to $e$ receive colors 1,2 and 3 in such a way that no two edges incident to the same vertex share a color; otherwise, we can color the edge $e$ with the remaining available color. Thus, there exist two colors $\alpha,\beta \in \{1,2,3\}$, such that $\alpha$ is absent at $u$ and $\beta$ is absent at $v$. Consider a maximal path $P_{e}$ starting from $\alpha$ and ending in $\beta$. By the minimality of $c$, $P_{e}$ must conclude at $v$, and the length of this path must be even; otherwise, we can switch the colors along $P_e$ to allow $e$ to be colored, yielding a contradiction. Hence, let $C_{e}$ be the odd cycle corresponding to the uncolored edge $e$ formed by $P_{e}$ together with the edge $e$. This leads to consider the following lemma:

		\begin{lemma}\label{lemma uncolored}(\cite{Fouquet,Steffen1,Steffen2})
			If $G$ is a graph of maximum degree at most three, and $ e,e'$ $(e \neq e') $ are two uncolored edges of $ G $, then $ V(C_{e}) \cap V(C_{e'})=\emptyset $.
		\end{lemma}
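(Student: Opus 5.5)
The plan is to argue by contradiction using only the extremality of $c$, that is, the fact that $c$ leaves the minimum possible number of uncolored edges. First I will pin down exactly what $C_e$ looks like. Write $e=uv$, with $\alpha$ missing at $u$ and $\beta$ missing at $v$. Because $e$ cannot be colored, $\alpha\neq\beta$, and in fact $\beta$ is present at $u$ and $\alpha$ is present at $v$. Let $P_e$ be the maximal $\alpha\beta$-alternating path starting at $u$ with its $\beta$-edge. As in the discussion above, it must end at $v$ with an $\alpha$-edge. Otherwise swapping $\alpha$ and $\beta$ on $P_e$ would free a color at both $u$ and $v$, and we could color $e$, contradicting minimality. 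Hence $C_e=P_e+e$ is an odd cycle. Every vertex of $C_e$ other than $u,v$ has both of its $C_e$-edges colored, one $\alpha$ and one $\beta$. Its third edge, if it exists, has the third color $\gamma_e$ or is uncolored.

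The key step is the following recoloring move. Take any colored edge $g=xy$ of $C_e$, uncolor it, and color $e$. Concretely, swap $\alpha$ and $\beta$ along the sub-path of $P_e$ from $u$ up to $x$, then give $e$ the appropriate color. The result is again a proper partial coloring with the same number of uncolored edges, so it is again optimal. In the new coloring the cycle attached to $g$ is the same vertex set $V(C_e)$, now using the colors $\alpha,\beta$ in shifted positions. So the structure "uncolored edge on an odd $\alpha\beta$-cycle" can be slid around $C_e$. I will verify that this move keeps properness at $x$ and $y$. This works because the colors of $x$ and $y$ outside $C_e$ are unaffected, and both $\alpha$ and $\beta$ end up absent at the correct endpoints.

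Now suppose $V(C_e)\cap V(C_{e'})\neq\emptyset$, and let $w$ be a common vertex. Since the maximum degree is $3$, $w$ lies on two edges of each cycle. If the two cycles share no edge at $w$, then $w$ would have degree at least $4$, which is impossible. So they share an edge at $w$, and more precisely $C_e$ and $C_{e'}$ share an edge $g$ incident to $w$. First I will treat the case $g\notin\{e,e'\}$. Then $g$ is colored, so it lies in both the $\alpha\beta$-path and the $\alpha'\beta'$-path, which forces $\{\alpha,\beta\}\cap\{\alpha',\beta'\}$ to contain the color of $g$. Using the sliding move, I will reposition the uncolored edge of $C_e$ to be adjacent to the shared region, or even to be $g$ itself. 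After this, I will reexamine the maximal path $P_{e'}$ in the new coloring. Either it now has odd length, so that swapping along it lets us color $e'$ and decrease the number of uncolored edges, or two uncolored edges become adjacent at a vertex with a free color. Both outcomes contradict minimality. The case where $g=e$ or $g=e'$ (an uncolored edge lying on the other cycle) is handled the same way: an uncolored edge cannot lie on a colored alternating path, so this can only happen when $e$ and $e'$ share an endpoint. In that situation that endpoint misses two colors, and $e$ or $e'$ could be colored directly unless the missing-color structure is very rigid, and I will rule that out by a short check.

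The main obstacle is the bookkeeping in the intersection case. I need to show that after the slide the other cycle's alternating path really changes parity or terminates early, and the subcase analysis depends on whether $\{\alpha,\beta\}=\{\alpha',\beta'\}$ or the two pairs share exactly one color. My first attempt will be to choose $w$ as the first vertex of $P_{e'}$, counted from an endpoint of $e'$, that lies on $C_e$. I will then slide the uncolored edge of $C_e$ onto an edge at $w$ not on $P_{e'}$'s initial segment. This way the path $P_{e'}$ is cut at $w$, and the parity of the cut-off path should give the contradiction.
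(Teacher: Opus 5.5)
The paper does not prove this lemma; it quotes it from Fouquet and Steffen, so there is no in-paper argument to compare with. Your strategy is the classical one and can be completed: slide the uncolored edge of $C_e$ to the first vertex where $P_{e'}$ meets $C_e$, so that $P_{e'}$ is truncated, then Kempe-swap and color $e'$. But the proposal stops exactly where the work is, and as you set it up the decisive step can fail.

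\textbf{Setting up the shared edge.} Relabel so that $P_e$ uses colors $1,2$ and $P_{e'}$ uses $1,3$. If the two pairs coincide, $P_e$ and $P_{e'}$ are the same component of the two-colored subgraph, so $\{u,v\}=\{u',v'\}$; the matching fact below excludes this. Your degree count gives a common edge at every common vertex. That edge is not $e$ or $e'$: the only uncolored edge of $C_{e'}$ is $e'$, so your case $g\in\{e,e'\}$ is simply vacuous, not a ``shared endpoint'' case. Hence the common edge has color $1$. Let $g=xy$ be the first common edge met along $P_{e'}$ from $u'$, with $x$ before $y$.

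\textbf{The slide direction matters.} After $g$ is uncolored, the even path $C_e-g$ has two alternating recolorings. Only the one giving the other $C_e$-edge at $x$ color $2$ cuts $P_{e'}$ at $x$. Your recipe (swap from $u$ up to $x$) gives that edge color $1$ whenever $x$ is the end of $g$ nearer to $u$ on $P_e$. Then the $\{1,3\}$-chain from $u'$ just runs on through $x$ along $C_e$, and nothing is gained. You must swap on the side of $y$.

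\textbf{Checks you still owe.} Even with the right orientation, the final swap needs these facts:
(i) $P_{e'}[u',x]$ has no edge on $C_e$, so it is unchanged, and it enters $x$ on a $3$-edge while $x$ now misses $1$.
(ii) $u'\notin V(C_e)$ unless $x=u'$.
(iii) $v'$ still misses $\beta'$. If $v'\in V(C_e)$, then by the matching fact it is not an end of $e$, so it is an inner vertex of $P_e$ seeing $1,2$; thus $\beta'=3$, and it keeps both colors unless $v'=y$.
Interchanging $1,3$ on $P_{e'}[u',x]$ and coloring $e'$ with $\beta'$ then gains an edge. Parity plays no role here. What matters is that the chain from $u'$ now ends at $x\neq v'$.

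\textbf{The missing matching fact.} The degenerate outcomes $x=u'$ and $y=v'$, and the coinciding-pairs case, all amount to two uncolored edges at one vertex of a maximum coloring. That is not a contradiction by itself, since in general neither edge can be colored directly. You need the separate fact that the uncolored edges form a matching. Suppose $uv$ and $uw$ are both uncolored. Then $u$ sees exactly one color $\gamma$ and misses $\alpha,\beta$; if it saw none, $v$ would need three colored edges. Both $v$ and $w$ must see $\alpha,\beta$, so both miss $\gamma$. By the same Kempe argument that forces $P_e$ to end at $v$, the $\{\alpha,\gamma\}$-chain starting at $u$ must end at $v$ and also at $w$, so $v=w$. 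Then $v$ carries two uncolored edges and sees at most one color, yet it must see both $\alpha$ and $\beta$, a contradiction. With this fact and the correctly oriented slide, your outline becomes a complete proof.
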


Next, we will need the following recent result.
\begin{theorem}
    \label{thm:KMZJCTB} (\cite{KMZ22}) Every bridgeless cubic graph admits an $S_4$-coloring.
\end{theorem}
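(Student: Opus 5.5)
The theorem is quoted from \cite{KMZ22}, so no new proof is needed here. If I were to reprove it, the first step would be to turn $S_4$-colorings into a purely structural statement. In $S_4$ the vertex $c$ has degree one, so an $S_4$-coloring of $G$ is an assignment of the colors $a,a_1,a_2,a_3,a_4$ to $E(G)$ in which every vertex sees exactly one of the triples $\{a,a_1,a_2\}$, $\{a_1,a_3,a_4\}$ or $\{a_2,a_3,a_4\}$. Colors $a_3$ and $a_4$ always occur together and alternate, so the $a_3\cup a_4$ edges form a collection $\mathcal{C}$ of vertex-disjoint even cycles. The $a$-edges form a perfect matching $N$ of $G-V(\mathcal{C})$. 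The remaining edges have degree $2$ at vertices off $\mathcal{C}$ and degree $1$ on $\mathcal{C}$, so they split into paths with ends on $\mathcal{C}$ and cycles avoiding $\mathcal{C}$. Paths are always properly $\{a_1,a_2\}$-colorable, while these cycles must be even.

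This gives the reformulation I would prove: $S_4\prec G$ if and only if $G$ has vertex-disjoint even cycles $\mathcal{C}$ and a perfect matching $N$ of $G-V(\mathcal{C})$ such that every cycle of $G-E(\mathcal{C})-N$ avoiding $V(\mathcal{C})$ is even. The converse direction is just recoloring, so it is routine.

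Next, since $G$ is bridgeless, Petersen's theorem gives a perfect matching $M$ with complementary $2$-factor $F$. If $F$ has no odd cycles, take $\mathcal{C}=\emptyset$ and $N=M$; this is just a $3$-edge-coloring. In general the number of odd cycles of $F$ is even. I would look for a family $\mathcal{C}$ of vertex-disjoint $M$-alternating even cycles, each using $F$-paths and $M$-edges, such that every odd cycle of $F$ meets $V(\mathcal{C})$. Then $N=M\setminus E(\mathcal{C})$ is a perfect matching of $G-V(\mathcal{C})$. The leftover $F$-edges form paths ending on $\mathcal{C}$ together with the untouched $F$-cycles, and all untouched cycles are even by construction. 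I would begin by pairing odd cycles of $F$ through $M$-edges. Where that fails, I would build an alternating cycle that enters an odd cycle via one $M$-edge, follows an arc of it, and leaves via another $M$-edge; the parity of the route can be adjusted by choosing which arc to follow.

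To make this work I would argue by a minimal counterexample and reduce the structure first. A contractable triangle $T$ can be contracted, since a coloring of $G/T$ lifts to $G$ by checking the three vertex types locally. Across a $2$-edge-cut, one can split $G$ into two smaller bridgeless cubic graphs by adding an edge on each side, then glue their colorings after permuting colors so that the two cut edges match. Nontrivial $3$-edge-cuts are handled the same way. This leaves $G$ cyclically $4$-edge-connected, and there I would choose $M$ so that $F$ has the fewest odd cycles, using the flexibility of perfect matchings in bridgeless graphs (for example, every edge lies in some perfect matching).

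I expect the main obstacle to be that last step: showing, for a cyclically $4$-edge-connected $G$ and a well-chosen $M$, that the alternating even cycles can be made vertex-disjoint and can hit every odd cycle of $F$ simultaneously. Lemma \ref{lemma uncolored} suggests a complementary route. Take a maximum partial $3$-edge-coloring; its uncolored edges $e$ come with pairwise vertex-disjoint odd cycles $C_e$. One could then try to enlarge each $C_e$ locally into an even cycle coloured by $a_3,a_4$, and reroute the colors $1,2,3$ as $a,a_1,a_2$ elsewhere. The disjointness of the $C_e$ is exactly what is needed to keep the cycles of $\mathcal{C}$ vertex-disjoint, so I would try this route first.
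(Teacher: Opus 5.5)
Your opening sentence matches the paper exactly: Theorem~\ref{thm:KMZJCTB} is imported from \cite{KMZ22} without proof. Your reformulation is also correct. Take $F_1$ to be the edges coloured $a$ or $a_3$ and $F_2$ those coloured $a$ or $a_4$; then $\mathcal{C}=F_1\triangle F_2$ and $N=F_1\cap F_2$, which is the two-perfect-matchings form recorded in the paper's conclusion.

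As a reproof, however, the sketch has a genuine gap, and it lies exactly where you suspect. A family $\mathcal{C}$ of vertex-disjoint $M$-alternating cycles meeting every odd cycle of $F$ exists if and only if some perfect matching $M'$ contains an edge of every odd cycle of $F$, i.e.\ $G-M-M'$ is bipartite. (Take $M'=M\triangle E(\mathcal{C})$; conversely, let $\mathcal{C}$ be the cycles of $M\triangle M'$.) So the step you leave open is not a technical residue but the entire content of the theorem. It is precisely what \cite{KMZ22} proves: any family of disjoint odd circuits in a bridgeless cubic graph is met by a single perfect matching, in particular for every $M$. So choosing $M$ with few odd cycles buys nothing.

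Your heuristic for this step is also inconsistent with your own set-up. In an $M$-alternating cycle no two consecutive edges lie in $F$, so each passage through an odd cycle $D$ of $F$ uses a single edge of $D$. There is no arc to choose, and no parity to fix, since $M$-alternating cycles are automatically even. If instead a cycle of $\mathcal{C}$ follows a longer arc of $D$, the inner vertices of the arc lie on $\mathcal{C}$ but keep their $M$-edges in $N=M\setminus E(\mathcal{C})$. Those vertices then see $a,a_3,a_4$, and $N$ is no longer a perfect matching of $G-V(\mathcal{C})$.

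The preparatory reductions are not valid as stated either. Gluing across a $2$-edge-cut or a nontrivial $3$-edge-cut ``after permuting colours'' needs automorphisms of $S_4$ carrying the colours on one side onto those on the other. But $\mathrm{Aut}(S_4)$ fixes both ends of $a$ and has edge orbits $\{a\}$, $\{a_1,a_2\}$, $\{a_3,a_4\}$. If the added edge is coloured $a$ in one piece and $a_3$ in the other, no permutation of colours makes the two colourings agree on the cut edges. You would need an inductive hypothesis that prescribes the orbit of a given edge, which a minimal-counterexample argument does not give for free. Contracting a contractable triangle, by contrast, is fine.

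Finally, the route through Lemma~\ref{lemma uncolored} cannot succeed as a local argument. The disjoint odd cycles $C_e$ exist in every subcubic graph, including $S_{10}$. Yet $S_{10}$ has no $S_4$-coloring, because in any $S_4$-coloring the edges coloured $a$ or $a_3$ form a perfect matching, and $S_{10}$ has none. Bridgelessness therefore has to enter globally, through perfect-matching theory as in \cite{KMZ22}, and that is the ingredient your proposal does not supply.
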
	

\begin{corollary}
    \label{cor:kBridgeColoring} Let $G$ be a cubic graph in which all bridges are trivial. Let $\theta$ be the number of bridges of $G$. Then there is a mapping $f:E(G)\rightarrow E(S_{10})$, such that
    \[|\overline{V(f)}|\leq \theta,\]
    and for every vertex $z\in \overline{V(f)}$, there are $e,e'\in \partial_G(z)$, such that $f(e)=f(e')$.
\end{corollary}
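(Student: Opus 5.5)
The plan is to reduce to Theorem~\ref{thm:KMZJCTB} by cutting off every trivial bridge together with its three-vertex side, suppressing the resulting degree-two vertices, and then lifting an $S_4$-coloring back to $G$. The only vertices allowed to fail will be the roots, and there is at most one of them per bridge.

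\textbf{Step 1: the pendant gadgets.} It suffices to treat connected $G$. Let $e=ab$ be a bridge, and let $G_1$ be a side of $G-e$ with three vertices. Since $G$ is cubic and loopless, $G_1$ is forced: $a$ is adjacent to $a'$ and $a''$, and $a'a''$ is a double edge. Thus $G_1\cup\{e\}$ is a copy of $S_4$ minus its vertex of degree one, and I call $G_1$ a \emph{gadget}. Consider the root $b$ (the endpoint of $e$ outside $G_1$).
\begin{itemize}
\item If $b$ carries three gadgets, then $G=S_{10}$ and the identity map works.
\item If $b$ carries exactly two gadgets, its third edge $bc$ is a bridge. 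The side of $bc$ containing $b$ has $7$ vertices, so by triviality the side of $c$ is a gadget. Then $G=S_{10}$ again.
\item If both sides of $e$ are gadgets, then $G=S_6$. Mapping $S_6$ onto the $a$- and $b$-parts of $S_{10}$, with the edge of $S_6$ between its two degree-one-side vertices sent to $(a,z)$, fails only at one endpoint of that edge. Since $\theta=1$ here, this is fine.
\end{itemize}
So in the remaining case every root carries exactly one gadget. Its other two edges lie in a bridgeless part of $G$.

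\textbf{Step 2: build the reduced graph $H$.} Delete all gadgets and their bridges. Every root now has degree two; suppress each one, replacing a maximal path through roots by a single edge. A few degenerate outcomes need direct handling.
\begin{itemize}
\item A whole component may become a cycle consisting only of roots. Then $G$ is a cycle with a gadget at each vertex. Colour all cycle edges with $(a,z)$ and all gadgets as the $b$-part of $S_{10}$ (bridges $\mapsto (b,z)$). Only the roots fail, each having two edges of equal colour.
\item A loop could appear when a root $b$ has a double edge to some vertex $c$. Then $c$'s third edge is a bridge whose sides can be computed, so $G$ is one explicit small graph that I check by hand.
\end{itemize}
Otherwise $H$ is a loopless cubic multigraph. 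Its bridges correspond to bridges of $G$ that are not incident to gadgets, hence to non-trivial bridges. Since there are none, $H$ is bridgeless.

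\textbf{Step 3: lift the colouring.} By Theorem~\ref{thm:KMZJCTB}, $H$ has an $S_4$-coloring $g$. Identify $S_4$ with the subgraph of $S_{10}$ spanned by $a,a_1,a_2,a_3,a_4$ together with the pendant edge $(a,z)$, so that $g$ takes values in $E(S_{10})$. Define $f$ on $G$ as follows.
\begin{itemize}
\item Every edge of $G$ lying on the path that was suppressed into an edge $h\in E(H)$ gets $f=g(h)$.
\item Every gadget bridge gets the edge $(b,z)$ of $S_{10}$.
\item The gadget's edges are mapped onto $b_1,b_2,b_3,b_4$ accordingly.
\end{itemize}
Non-root vertices of $G$ are then satisfied: vertices of $H$ inherit the condition from $g$, and gadget vertices look locally like the $b$-part of $S_{10}$. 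Each root fails at most, and at a failing root the two path edges receive the same colour $g(h)$. Hence $|\overline{V(f)}|\le$ number of roots $=\theta$, with the required equal-colour property.

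\textbf{Main obstacle.} I expect the main difficulty to be the case analysis in Steps~1--2. One must verify that suppression never creates a loop or a bridge except in the finitely many small graphs listed, and that those small graphs satisfy the bound directly. The lifting itself is routine.
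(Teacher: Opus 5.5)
Your proof follows the paper's argument. You delete the three-vertex side of every bridge, suppress the resulting degree-two vertices, and apply Theorem~\ref{thm:KMZJCTB} to the bridgeless cubic graph you get. You then lift the coloring so that every edge of a suppressed path receives the color of the edge it became. Each root then sees a repeated color, and only roots can fail. Sending the gadgets to the $b$-part of $S_{10}$ instead of the $a$-part, as the paper does, makes no difference. Your treatment of the degenerate situations ($S_{10}$, $S_6$, components that become cycles of roots, loops) is more careful than the paper's, which passes over them silently.

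There is one concrete slip, in the $S_6$ case. With one gadget sent to the $a$-part, the other to the $b$-part, and the bridge to $(a,z)$, the failing endpoint sees the three distinct colors $(a,z), b_1, b_2$. This violates the second clause of the corollary, which requires two equal colors at every bad vertex. Fix it by sending both gadgets to the $a$-part. That gives a genuine $S_{10}$-coloring of $S_6$, so $\overline{V(f)}=\emptyset$.

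Your loop case also needs no hand check. Suppose a maximal path of $k$ roots returned to a non-root vertex $v$. Then the third edge of $v$ would be a bridge whose $v$-side has $4k+1\ge 5$ vertices. Triviality then forces the other side to be a gadget, which would make $v$ a root, a contradiction. So loops occur only inside components that are entirely cycles of roots. Your "explicit small graph" is just the $2$-cycle instance of the case you already colored.
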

\begin{proof} Consider a graph $G'$ obtained from $G$ by removing the vertices of all $\theta$ non-contractable triangles. Note that $G'$ is a graph of maximum degree at most three, in which we have $\theta$ vertices of degree two. A path $P$ of degree-two vertices in $G'$ is a simple path joining vertices $v_1$ and $v_2$, such that $v_1$ and $v_2$ are of degree three and all internal vertices of $P$ are of degree two in $G'$. Consider a cubic graph $W'$ obtained from $G'$ by removing all the internal vertices of every such a path $P$ and joining their respective end-points $v_1$ and $v_2$ with an edge $v_1v_2$. Note that $W'$ is a bridgeless cubic graph, and hence it has an $S_4$-coloring (see Theorem \ref{thm:KMZJCTB}). Consider a coloring $f$ of $G$ obtained as follows: for every path $P$ of degree two vertices in $G'$, let $v_1$ and $v_2$ be the two end-vertices of $P$. Color all edges of $P$ with the color of $v_1v_2$ in $W'$, then color the trivial bridges with color $a$, and then extend it to the edges of the non-contractable triangle by using colors $a_1, a_2, a_3$ and $a_4$.

    Note that for this coloring we have:
    \[|\overline{V(f)}|\leq \theta,\]
    and for every vertex $z\in \overline{V(f)}$, there are $e,e'\in \partial_G(z)$, such that $f(e)=f(e')$.
\end{proof}

\section{Main Results}

In this section, we obtain the main results of the paper. In \cite{AuJC2018}, it is shown that:

\begin{theorem}
    \label{thm:|V|5Bound} (\cite{AuJC2018}) Let $G$ be a cubic graph. Then, there is a mapping $f:E(G)\rightarrow E(S_{10})$, such that 
    \[|V(f)|\geq \frac{4}{5}\cdot |V(G)|.\]
\end{theorem}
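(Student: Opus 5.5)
The plan is to start from a partial proper $3$-edge-coloring of $G$, read it as a partial $S_{10}$-coloring through the central vertex of $S_{10}$, and then charge every violated vertex to a private set of vertices of $G$ whose size can be bounded from below.

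\emph{Setup.} Identify the three colors $1,2,3$ with the three edges $a,b,c$ of $S_{10}$ incident to its central vertex. Any vertex of $G$ whose three edges are colored properly with $1,2,3$ then satisfies the $S_{10}$-condition, because it maps onto the central vertex. Choose a proper partial coloring $c$ of $E(G)$ with colors $\{1,2,3\}$ that leaves the minimum number of edges uncolored. For every uncolored edge $e=uv$, the discussion preceding Lemma \ref{lemma uncolored} gives an odd cycle $C_e$ through $e$. By Lemma \ref{lemma uncolored}, the cycles $C_e$ for distinct uncolored edges are pairwise vertex-disjoint. Only endpoints of uncolored edges can violate the condition. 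Consequently, as soon as the uncolored edges can be assigned colors so that each one produces at most one bad vertex per $5$ vertices of $C_e$, the bound $|\overline{V(f)}|\le \frac{1}{5}|V(G)|$ follows.

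\emph{Key step.} The main step is a local recoloring of $C_e$ that uses the non-central parts of $S_{10}$.
\begin{itemize}
\item The edge $e$ meets colors $\beta,\gamma$ at $u$ and colors $\alpha,\gamma$ at $v$. The cycle $P_e+e$ alternates $\alpha/\beta$ except at $e$.
\item I will map the edges of $C_e$ alternately to the parallel pair $a_3,a_4$ of a pendant block. I will also map the chords leaving $C_e$ to $a_1/a_2$ or to $a$, matching the vertex types $\{a_1,a_3,a_4\}$, $\{a_2,a_3,a_4\}$ and $\{a,a_1,a_2\}$ of $S_{10}$.
\item The goal is that all but one vertex of $C_e$ (plus possibly one neighbouring vertex) satisfy the condition. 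The remaining bad vertex is the place where the odd parity of $C_e$ forces a clash.
\item The vertices adjacent to $C_e$ see an $a$/$a_1$/$a_2$ edge that used to be colored $\gamma$. I will repair them by choosing among the three pendant blocks $a_\ast,b_\ast,c_\ast$ of $S_{10}$, so that the block used for $C_e$ is the one indexed by $\gamma$. This way an edge colored $\gamma$ still maps to the edge $\gamma\in\{a,b,c\}$.
\end{itemize}

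\emph{Counting.} After the recoloring, each cycle $C_e$ contains at most one bad vertex. Since the cycles are disjoint, this gives $|\overline{V(f)}|\le |V(G)|/|C_e|_{\min}$. Therefore it suffices to argue that every $C_e$ carrying a bad vertex has at least $5$ vertices, or otherwise to charge the bad vertex to at least $5$ vertices that are private to it.

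\emph{Main obstacle.} The hard case is short cycles, namely $|C_e|=3$: a triangle, possibly with a parallel edge. Here I will first try to strengthen the choice of $c$. Among colorings with the minimum number of uncolored edges, I will choose one that maximizes the total length of the cycles $C_e$, or minimizes the number of uncolored edges lying on triangles. I will then show that a triangle $C_e$ can either be recolored so that no vertex is bad, or else contains a contractable-triangle or multi-edge structure. In the latter case the triangle, together with its attached vertices, forms a configuration of at least $5$ vertices that maps exactly onto a pendant block $\{a,a_1,\dots,a_4\}$ of $S_{10}$, with at most one defect. Verifying this case analysis, including parallel edges in the multigraph setting, is where I expect most of the work to lie.
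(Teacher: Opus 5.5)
Your overall frame matches the argument of \cite{AuJC2018} that the paper reuses: a maximum partial coloring with the central edges $a,b,c$, the vertex-disjoint odd cycles $C_e$ of Lemma~\ref{lemma uncolored}, one defect per uncolored edge, and the requirement $|V(C_e)|\geq 5$. Your ``key step'', however, is unnecessary, and as described it creates new defects. To get a single defect per uncolored edge $e=uv$, it suffices to give $e$ a color missing at $u$; then only $v$ fails, and $v$ sees a repeated color. Mapping $C_e$ alternately onto $a_3,a_4$ forces the third edge at each good vertex of $C_e$, which was colored $\gamma$, to become $a_1$ or $a_2$. Its other endpoint, if it lies off $C_e$, still carries central colors on its remaining two edges, so it becomes bad. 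Using the block indexed by $\gamma$ does not help, since that edge no longer maps to $\gamma$.

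The genuine gap is the triangle case, which you leave open; the route you sketch for it cannot work.

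\textbf{Non-contractable triangles.} These are the pendant pieces: a triangle with a double edge, hanging on a trivial bridge. No choice among maximum colorings of $G$ rescues them. In $G=S_{10}$, each of the three pieces forces an uncolored edge among its four inner edges, and the $C_e$ of that edge is necessarily that triangle. Allowing one defect per piece therefore gives $3>10/5$ bad vertices. Moreover, a piece together with its root has only $4$ vertices, and the root is shared by all three pieces, so there are no $5$ private vertices to charge. The missing idea is that these pieces must cost nothing. Delete them first and take the maximum coloring only on the remaining graph $G'$. Then give each trivial bridge a color $d$ left free at its root, and map the piece onto the pendant block $d_1,\dots,d_4$.

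\textbf{Contractable triangles.} In a maximum coloring, a triangular $C_e$ has the same color $\gamma$ on all three edges leaving it, so no recoloring confined to it makes all three of its vertices good. Its neighbours may also lie on other cycles $C_{e'}$, because Lemma~\ref{lemma uncolored} separates only the cycles, not their neighbourhoods. These triangles are instead removed by induction: contract $T$ and apply the statement to $G/T$. If $v_T$ is good, give each edge of $T$ the color of its opposite edge. If $v_T$ sees $x,x,w$, use two further edges $y,z$ at an end of $x$, as in Case~B of the proof of Theorem~\ref{thm:|V|6Bound}. Either way two good vertices are gained, which preserves the ratio. This requires strengthening the hypothesis so that every bad vertex sees a repeated color.

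After both reductions $G'$ is triangle-free, every $C_e$ has at least $5$ vertices, and $|\overline{V(f)}|\leq |V(G')|/5\leq |V(G)|/5$. This is exactly what the paper does at the start of that proof and in its Case~1.
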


The main idea of the proof of this theorem given in \cite{AuJC2018} relies on the structural properties of maximum $3$-edge-colorable subgraphs in graphs of maximum degree three and in particular, on Lemma \ref{lemma uncolored}. One may wonder whether something similar can be proved for cubic-multigraphs with perfect matchings, where instead of $S_{10}$, we consider edges of $S_{12}$ as colors. Our first result answers this question.

\begin{figure}
	\begin{center}
         \begin{tikzpicture}[scale = 0.70]

              \begin{scope}
              \clip (-4,0) rectangle (4,4);
              \draw (0,0) circle(4);
              \draw (-4,0) -- (4,0);
              \end{scope}

              \coordinate (c) at (0,4);
              \coordinate (a) at (-4,0);
              \coordinate (b) at (4,0);
              \coordinate (a1) at (2.8284,2.8284);
              \coordinate (b1) at (-2.8284,2.8284);
              \coordinate (c1) at (0,6.5);
              \coordinate (b2) at (-4,5);
              \coordinate (a2) at (4,5);
              \draw[fill=black] (c) circle [radius = 0.15 cm];
              \draw[fill=black] (a) circle [radius = 0.15 cm];
              \draw[fill=black] (b) circle [radius = 0.15 cm];
              \draw[fill=black] (a1) circle [radius = 0.15 cm];
              \draw[fill=black] (b1) circle [radius = 0.15 cm];
              \node at (-4,1.5) {$\alpha$};
              \node at (4,1.5)  {$\beta$};
              \node at (1.8,3.8)  {$\alpha$};
              \node at (-1.8,3.8)  {$\beta$};
              \node at (0,-0.2)  {$e$};
              \node at (3.7,3.7) {$\gamma$};
              \node at (-3.7,3.7) {$\gamma$};
              \node at (-0.3,5.2) {$\gamma$};
              \coordinate (z) at (-4.8,-2);
              \coordinate (w) at (4.8,-2);
              \draw (a) -- (z);
              \draw (b) -- (w);
              \draw (b1) -- (b2);
              \draw (c) -- (c1);
              \draw (a1) -- (a2);
              \node at (-4.6,-1) {$\gamma$};
              \node at (4.6,-1)  {$\gamma$};
         \end{tikzpicture}
     \end{center}
\caption{An odd cycle corresponding to an \textit{uncolored edge} $e$}
\end{figure}

	\begin{theorem}
			Let G be a cubic graph with a perfect matching. Then, there exists a mapping  $f:E(G)\longrightarrow E(S_{12})$, such that:
			\begin{center}
				$|V(f)| \geq \dfrac{4}{5}\cdot|V(G)|$
			\end{center} 
			and for any $u \in \overline{V(f)}$ there are two edges $e,e' \in \partial_G(u)$ such that $f(e)=f(e')$.
		\end{theorem}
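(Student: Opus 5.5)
The plan is to build $f$ from a maximum $3$-edge-colorable subgraph of $G$ whose colors are identified with edges of $S_{12}$. I will then pay one defective vertex per odd cycle $C_e$ from Lemma \ref{lemma uncolored}, and use the perfect matching to make these odd cycles long.

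\textbf{Color dictionary.} Write the central triangle of $S_{12}$ as $xyz$, where $z$ carries the bridge $a$ to the $a$-block, $x$ carries $c$ and $y$ carries $b$. The star of $z$ in $S_{12}$ is $\{a,zx,zy\}$. So any proper $3$-edge-coloring with colors $\{a,zx,zy\}$ satisfies the $S_{12}$-condition at every vertex whose three edges are colored.

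\textbf{Step 1: choosing the partial coloring.} Let $M$ be a perfect matching of $G$ and $\overline{M}$ its complementary $2$-factor. Among all perfect matchings, I will choose $M$ so that $\overline{M}$ has the fewest odd cycles, and secondarily the fewest triangles.
\begin{itemize}
\item Color every edge of $M$ with $a$.
\item Color each cycle of $\overline{M}$ alternately with $zx,zy$. Each odd cycle then leaves exactly one uncolored edge.
\item Take this as the starting point and pass to a partial proper $3$-coloring minimizing the number of uncolored edges.
\end{itemize}
By the discussion before Lemma \ref{lemma uncolored}, every uncolored edge $e=uv$ lies on an odd cycle $C_e$ formed by $e$ and an $\alpha$--$\beta$ Kempe path, and these cycles are pairwise vertex-disjoint.

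\textbf{Step 2: one bad vertex per uncolored edge.} For an uncolored edge $e=uv$, let $\alpha$ be the color missing at $u$. Put $f(e)=\alpha$. Then $u$ sees three distinct colors and is good, while $v$ sees a repeated color $\alpha$. Thus $v$ is the only defective vertex coming from $e$, and it has two edges with the same image, which is the extra property required in the statement. Consequently $|\overline{V(f)}|$ equals the number of uncolored edges. By disjointness,
\[|\overline{V(f)}|\leq \frac{|V(G)|}{\min_e |V(C_e)|}.\]
So it suffices that every $C_e$ has at least $5$ vertices, or that the short ones are absorbed in some other way.

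\textbf{Step 3: the main obstacle, triangles.} The difficulty is uncolored edges whose odd cycle $C_e$ is a triangle. Here I will use the pendant blocks of $S_{12}$ rather than the central triangle. If $C_e=uvw$, the three edges leaving the triangle all receive one color $\gamma$, as in the figure. I will recolor the triangle edges with $a_1,a_2,a_3$ and give the outgoing edges the matching colors, so that the triangle's vertices look like the vertices of an $a$-block of $S_{12}$. The hope is that the one unavoidable conflict is again a single vertex with a repeated color. Then I will charge it to a neighbourhood of at least five vertices: the triangle together with further vertices of the adjacent $\gamma$-edges. Because the triangles $C_e$ are disjoint, these charging regions should not overlap.

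If this direct recoloring fails, I would instead argue that a triangle $C_e$ can be avoided by the choice of $M$. Alternatively, I would contract the contractable triangle $T$, apply induction to $G/T$ (which still has a perfect matching), and expand $T$ back. The expansion must realise the star at the contracted vertex. This is the step I expect to need the most case analysis, because stars at vertices other than $x,y,z$ do not lift directly to a triangle.
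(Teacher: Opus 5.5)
There is a genuine gap: Step 3, which carries the whole theorem, is a list of possible strategies rather than an argument. Neither of its two routes works as stated.

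\textbf{Steps 1 and 2 only give $|V(G)|/3$.} They are correct, but nothing prevents $C_e$ from being a triangle, so they only yield $|\overline{V(f)}|\le |V(G)|/3$. The choice of $M$ in Step 1 is also washed out: once you pass to an arbitrary partial coloring with the fewest uncolored edges, the cycles $C_e$ are Kempe cycles of that coloring and are not controlled by $\overline{M}$. So the perfect-matching hypothesis is never actually used.

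\textbf{The local recoloring cannot work.} Suppose the three edges leaving a triangle $uvw$ all have image $\gamma$. Then no choice of images for the three triangle edges makes $u,v,w$ all good. Each good vertex is sent to an end of $\gamma$, so if all three were good, two of them, say $u,v$, would go to the same end $p$. That forces $f(uw)=f(vw)$ to be the edge of $\partial_{S_{12}}(p)\setminus\{\gamma,f(uv)\}$, so $w$ is defective. Hence for a contractable triangle you must change edges leaving it, which pushes defects outside. Concretely, mapping the triangle onto an end-block with $a_1,a_2,a_3$ forces the two distinct edges leaving $v$ and $w$ both to receive $a_4$. Their far endpoints then become defective unless they too are pushed into that end-block, and the change cascades.

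\textbf{The charging is unjustified.} Lemma~\ref{lemma uncolored} makes the cycles $C_e$ disjoint, not their neighbourhoods. Two triangles $C_e,C_{e'}$ joined by an edge already have overlapping charging regions. For pendant triangles, the real requirement is that the trivial bridge gets image $a$ while the root's other two edges get $zx,zy$. A maximum partial coloring gives you no control over that.

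\textbf{The contraction fallback rests on a false claim.} $G/T$ need not have a perfect matching: it has one if and only if some perfect matching of $G$ uses an edge of $T$. For the central triangle $T$ of $S_{12}$ itself, $S_{12}/T=S_{10}$, which has none.

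This is exactly why the paper splits the proof into two cases.
\begin{itemize}
\item If some contractable $T$ has $G/T$ with a perfect matching, the paper contracts $T$, applies induction, and re-expands.
\item Otherwise every perfect matching of $G$ contains all edges leaving every contractable triangle, and these triangles are pairwise disjoint. The paper then colors $G$ directly: it contracts them, takes a coloring with fewest uncolored edges, argues that each $C_e$ contains a vertex not coming from a triangle, and sends pendant triangles to the end-blocks of $S_{12}$.
\end{itemize}

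\textbf{The lifting worry is unfounded.} Coloring each edge of $T$ with the image of its opposite edge lifts \emph{any} star at $v_T$: all three vertices of $T$ go to the same vertex of $S_{12}$. If $v_T$ is defective with repeated image $\eta$, pick a star $\{\eta,\sigma,\omega\}$. Give $\sigma$ to the two edges of $T$ opposite the $\eta$-edges and $\omega$ to the third; this leaves a single defective vertex in $T$, with a repeated color. Either way $|V(f)|=|V(h)|+2$, so the ratio is preserved.

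\textbf{What is missing.} You need a genuine treatment of the graphs in which no contractable triangle can be contracted within the class of graphs with a perfect matching, together with the pendant triangles. Since your Steps 1--2 never use the perfect matching, you could instead try running the induction over all cubic graphs, where contraction is always allowed. Even then, the pendant triangles still need a real argument.
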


		\begin{proof}
		\begin{flushleft}
		
			We prove the theorem by induction on the number of vertices. If $|V(G)|=2$, then the resulting graph is $K_{2}^3$, the unique graph with two vertices and three parallel edges. Color the edges of this graph with the edges incident to an arbitrary vertex $u \in S_{12}$, and hence the base case is satisfied. 

            \smallskip
            
            Now, assume that the statement holds for all cubic graphs with $|V(G)|< n $, and consider a cubic graph $G$ with $|V(G)|=n\geq 4$ vertices. Choose an arbitrary contractable triangle $\textit{T}$ and consider the cubic graph $W:=G/T$. Let $v_{T}$ be the vertex of $W$ obtained by contracting $ T $. 
			We will consider two cases. 
			
		\end{flushleft}
			\begin{flushleft}
				$Case$ 1: $G/T$ has a perfect matching.
			\end{flushleft}  
			Since $W$ has $n-2$ vertices, by the inductive hypothesis there is a mapping $h: E(W) \longrightarrow E(S_{12})$, such that:
			\begin{center}
				$|V(h)| \geq \dfrac{4}{5}\cdot |V(W)|$,
			\end{center} 
			\begin{flushleft}
				and for any $u \in V(W)\backslash V(h)$ there are two edges $e,e' \in \partial_W(u)$ such that $h(e)=h(e')$. We consider two subcases.
			\end{flushleft}
			$subcase$ 1.1: $v_{T} \in V(h)$.\\
			
			\begin{flushleft}
				Then, there is a vertex $s \in V(S_{12})$, such that $\partial_{S_{12}} (s)= \{ \alpha, \beta, \gamma \}$ and $h(\partial_W(v_{T}))= \partial_{S_{12}} (s)$. Let $\alpha$ be the edge of $S_{12}$ lying in a perfect matching of $S_{12}$. Consider the mapping $f:E(G)\longrightarrow E(S_{12})$ obtained from $h$ as follows: color every edge $e$ of $T$ with a color from $\{\alpha,\beta, \gamma\}$ that is used to color the opposite edge of $e$ in $h$. The corresponding coloring is shown in the Figure \ref{fig:Thasperfectmatching}. Observe that since $W$ has a perfect matching by hypothesis, the only possible configuration for this case is shown in the Figure \ref{fig:Thasperfectmatching}. We obtain:\\
			\end{flushleft}
			\begin{center}
				$ |V(f)|=|V(h)|+2 $, and $|V(G)|=|V(W)|+2$,
			\end{center}
			thus
			\begin{center}
				$ \dfrac{|V(f)|}{|V(G)|}=\dfrac{|V(h)|+2}{|V(W)|+2} \geq \dfrac{|V(h)|}{|V(W)|} \geq \dfrac{4}{5}$,
			\end{center}
			and we conclude that
			\begin{center}
				$|V(f)| \geq \dfrac{4}{5}\cdot|V(G)|$.
			\end{center}  
			\begin{flushleft}
			
	\begin{figure}[!htbp]
     \begin{center}
	   \begin{tikzpicture}[scale=0.54]          
            \draw[fill=black] (0,0) circle [radius=0.15cm] ;
            \draw[fill=black] (5,0) circle [radius=0.15cm];
            \draw[fill=black] (2.5,4) circle [radius=0.15cm] ;
            \draw [blue] (0,0) -- (5,0); 
            \draw (5,0)-- (2.5,4) --(0,0);
            \draw [blue] (2.5,4)--(2.5,7); 
            \draw  (0,0) -- (-1.5,-2);
            \draw  (5,0) -- (6.4,-1.9); 
            \node at (2.25,5.5) {$\alpha$};
            \node at (-1.1,-1) {$\beta$};
            \node at (6.2,-1){$\gamma$};
            \node at (2.5,-0.3) {$\alpha$};
            \node at (0.7,2) {$\gamma$};
            \node at (4.3,2) {$\beta$};
            \node at (1.9,4) {$v_{T}$};
            
            \draw[fill=black] (-8,2.5) circle [radius=0.15cm];
            \node at (-8.5,2.5) {$v_{T}$};
            \draw[blue] (-8,2.5)--(-8,5.5);
            \node at (-7.6,3.7) {$\alpha$};
            \draw (-8,2.5)--(-10,-0.5);
            \node at (-9.2,1.2) {$\beta$};
            \draw (-8,2.5)--(-6,-0.5);
            \node at (-6.8,1.2) {$\gamma$};
        \end{tikzpicture}
        
        \caption{Coloring of a contractable triangle: the blue edges lie in a perfect matching.}\label{fig:Thasperfectmatching}
        \end{center}
        \end{figure}	
				$ subcase $ 1.2: $v_{T} \notin V(h)$.
			\end{flushleft}
            
            
			Since $W$ has a perfect matching and $|V(W)|<n$, there are two edges $e,e' \in \partial_{S_{12}}(v_{T})$, such that $ h(e)=h(e')$. Consider three adjacent edges in $S_{12}$ such that $h(e)=\eta$ and let $ \sigma $ and $ \omega $ be the other two incident to the same vertex of $\eta$. 
            
            Let $f:E(G)\longrightarrow E(S_{12})$ be a mapping obtained from $ h $ as follows: color the two edges of $T$ that are opposite to the edges of color $\eta$ in $h$, with color $\sigma$, and the remaining edge that is opposite to an edge of color $\omega$ with color $\omega$.        
             See the coloring on Figure \ref{fig:subcase}. As in the previous case, we have that:
             \begin{center}
				$ |V(f)|=|V(h)|+2 $, and $|V(G)|=|V(W)|+2$,
			\end{center}
			and therefore
			\begin{center}
				$ \dfrac{|V(f)|}{|V(G)|}=\dfrac{|V(h)|+2}{|V(W)|+2} \geq \dfrac{|V(h)|}{|V(W)|} \geq \dfrac{4}{5}$.
			\end{center}
			We conclude that
			\begin{center}
				$|V(f)| \geq \dfrac{4}{5}\cdot|V(G)|$.
			\end{center}
			
			\begin{figure}
		        \begin{center}
		        
	        \begin{tikzpicture}[scale=0.54]          
            \draw[fill=black] (0,0) circle [radius=0.15cm] ;
            \draw[fill=black] (5,0) circle [radius=0.15cm];
            \draw[fill=black] (2.5,4) circle [radius=0.15cm] ;
            \draw [blue] (0,0) -- (5,0); 
            \draw (5,0)-- (2.5,4) --(0,0);
            \draw [blue] (2.5,4)--(2.5,7); 
            \draw  (0,0) -- (-1.5,-2);
            \draw  (5,0) -- (6.4,-1.9); 
            \node at (2.25,5.5) {$\omega$};
            \node at (-1.1,-1) {$\eta$};
            \node at (6.2,-1){$\eta$};
            \node at (2.5,-0.3) {$\omega$};
            \node at (0.7,2) {$\sigma$};
            \node at (4.3,2) {$\sigma$};
            \node at (1.9,4) {$v_{T}$};
            
            \draw[fill=black] (-8,2.5) circle [radius=0.15cm];
            \node at (-8.5,2.5) {$v_{T}$};
            \draw[blue] (-8,2.5)--(-8,5.5);
            \node at (-7.6,3.7) {$\omega$};
            \draw (-8,2.5)--(-10,-0.5);
            \node at (-9.2,1.2) {$\eta$};
            \draw (-8,2.5)--(-6,-0.5);
            \node at (-6.8,1.2) {$\eta$};
        \end{tikzpicture}
        
        \caption{The coloring of a contractable triangle such that $v_{T} \notin V(h)$.}\label{fig:subcase}
        \end{center}
			\end{figure}
		
			$Case$ 2: $G/T$ has no perfect matching for any contractable triangle $T$. \\ \\
			Since $G$ has a perfect matching, the edges incident to the vertices of a contractable triangle must lie in every perfect matching of $G$. We present a method for coloring the edges of $G$. 
			
			Consider the graph $G'$ obtained from $G$ by removing the vertices of all non-contractable triangles. In $G'$, we have only contractable triangles such that $G/T$ has no perfect matching for any contractable triangle $T$. In particular, this means that these triangles are vertex-disjoint in $G'$.
            Let $W$ be the graph obtained from $G'$ by contracting all the triangles in $G'$. Observe that in $W$, the degree of each vertex is either $2$ or $3$. Now, consider a coloring of edges of $W$ with the bridges $a$,$b$ and $c$ of $S_{12}$ (as labeled on Figure \ref{fig:Sylv12graph}) such that the number of uncolored edges is smallest. Let $e$ be an uncolored edge. We claim that the corresponding cycle $C_{e}$ contains a vertex $v_{e}$ which does not correspond to a contracted triangle in $W$. Suppose by contradiction that all vertices on $C_{e}$ correspond to contracted triangles. By replacing each vertex of \(C_{e}\) with its original triangle, we obtain a cycle of triangles as shown in Figure \ref{fig:cycletriangles}. This configuration contradicts our hypothesis, as it is possible to shift the colors along the cycle such that the contraction of the triangles yields a perfect matching, which brings us to a configuration from $ Case $ 1. We conclude that $C_{e}$ contains at least one vertex $v_{e}$ that does not correspond to a contractable triangle. 
            Note that, since the vertices in $W$ have degree at most 3,  Lemma \ref{lemma uncolored} implies that the cycles corresponding to uncolored edges are vertex-disjoint, meaning $C_e$ contains at most one such vertex. We now extend this coloring to the edges of $G'$. In $G'$, the edges of the contractable triangles belonging to a perfect matching receive the color $a$,$b$, and $c$ while their adjacent edges receive colors $x$, $y$ and $z$ as shown in the Figure \ref{fig:secondcase} (see also Figure \ref{fig:Sylv12graph}). This implies that the cycles corresponding to uncolored edges yield a proper coloring in $G'$ for vertices belonging to contractable triangles, leaving at most one vertex that does not satisfy the condition. Now, let $T'$ be a non-contractable triangle of $G$. Extend this coloring to that of $G$. Color the edges of $T'$ to the corresponding edges of the end-block of $S_{12}$. Clearly, the bridges lie in a perfect matching, and so they receive a color $d \in \{a,b,c\}$. Thus, the edges of the component of $T'$ that does not contain the root are colored with the corresponding ones in $S_{12}$ in such a way to preserve the adjacency. Hence, \[|\overline{V(f)}| \leq \frac{|V(G')|-3\tau}{5}\leq \frac{|V(G')|}{5}\leq \frac{|V(G)|}{5},\]
            where $\tau$ is the number of contractable triangles. We conclude that
			\begin{center}
				$|V(f)| \geq \dfrac{4}{5}\cdot|V(G)|$.
			\end{center}  
            Note that by the description of this method, we have that for all vertices $v\in \overline{V(f)}$, there are edges $e,e'\in \partial_G(v)$, such that $f(e)=f(e')$.
            
\begin{figure}[!htbp]
 \centering

  \begin{center}
	  \begin{tikzpicture}[scale=0.54]

       \draw[fill=black] (0,0) circle [radius=0.15cm] ;
       \draw[fill=black] (5,0) circle [radius=0.15cm];
       \draw[fill=black] (2.5,4) circle [radius=0.15cm] ;
       \draw (0,0) -- (5,0) -- (2.5,4) --(0,0);
       \draw [blue] (2.5,4)--(2.5,7); 
       \draw [blue]  (0,0) -- (-1.5,-2);
       \draw [blue] (5,0) -- (6.4,-1.9); 
       \node at (2.25,5.5) {$a$};
       \node at (-1.1,-1) {$b$};
       \node at (6.2,-1){$c$};
       \node at (2.5,-0.3) {$x$};
       \node at (0.7,2) {$z$};
       \node at (4.3,2) {$y$};
  
\end{tikzpicture}
   \caption{The coloring with the edges of $S_{12}$ of a contractable triangle $T$ such that $G/T$ has no perfect matching}\label{fig:secondcase}
\end{center}
\end{figure}

\begin{figure}[!htbp]
\begin{center}
\begin{tikzpicture}[scale=0.54]

	\coordinate (a2) at (14.75,-2.6+2.6);
	\coordinate (a22) at (15.75,-3.6+2.6);
	\coordinate (b2) at (13,-2+2.6);
	\coordinate (c2) at (14,-1+2.6);
	
	
	\draw[fill=black] (a2) circle [radius=0.15cm] ;
	
	\draw[fill=black] (b2) circle [radius=0.15cm] ;

	\draw[fill=black] (c2) circle [radius=0.15cm] ;
	
	\draw (a2)--(b2)--(c2)--(a2);
	\draw[blue] (a2) -- (a22);

	
	\coordinate (a3) at (14.65,6.75);
	\coordinate (a33) at (15.65,7.75);
	\coordinate (b3) at (14,5);
	\coordinate (c3) at (13,6);

	
	\draw[fill=black] (a3) circle [radius=0.15cm] ;
	
	\draw[fill=black] (b3) circle [radius=0.15cm] ;
	
	\draw[fill=black] (c3) circle [radius=0.15cm] ;

	\draw (a3)--(b3)--(c3)--(a3);
	\draw[blue] (a3) -- (a33);

	
	\coordinate (a4) at (5.33,6.66);
	\coordinate (a44) at (4.33,7.66);
	\coordinate (b4) at (7,6);
	\coordinate (c4) at (6,5);
	
	
	\draw[fill=black] (a4) circle [radius=0.15cm] ;
	
	\draw[fill=black] (b4) circle [radius=0.15cm] ;
	
	\draw[fill=black] (c4) circle [radius=0.15cm] ;
	
	\draw (a4)--(b4)--(c4)--(a4);
	\draw[blue] (a4) -- (a44);

    	\coordinate (a1) at (5.23,-2.7+2.6);
    	\coordinate (a11) at (4.23,-3.7+2.6);
    	\coordinate (b1) at (6,-1+2.6);
    	\coordinate (c1) at (7,-2+2.6);
    	
    	\draw[fill=black] (a1) circle [radius=0.15cm] ;
    	
    	\draw[fill=black] (b1) circle [radius=0.15cm] ;
    	
    	\draw[fill=black] (c1) circle [radius=0.15cm] ;
    	
    	\draw (a1)--(b1)--(c1)--(a1);
    	\draw[blue] (a1) -- (a11);
    	
       
       	
	   \coordinate (a5) at (9.9,9.8);
	   \coordinate (a55) at (9.9,11.2);
	   \coordinate (b5) at (10.7,8);
	   \coordinate (c5) at (9.2,8);
    	
       \draw[fill=black] (a5) circle [radius=0.15cm] ;
    	
        \draw[fill=black] (b5) circle [radius=0.15cm] ;
    	
        \draw[fill=black] (c5) circle [radius=0.15cm] ;
    	
    	\draw (a5)--(b5)--(c5)--(a5);
    	\draw[blue] (a5) -- (a55);
    	
    	\draw [blue] (b4) --(c5);
    	\draw [blue] (b5) -- (c3);
    	\draw [blue] (c4) -- (b1);
    	\draw [blue] (c1) -- (b2);
    	\draw [blue] (c2) --(b3);

\end{tikzpicture}
 \caption{this situation is not admissible since $G/T$ has no perfect matching}
\label{fig:cycletriangles}
\end{center}
\end{figure}
			
		\end{proof}

Theorem \ref{thm:|V|5Bound} proved in \cite{AuJC2018} shows that for every cubic graph there is a mapping $f:E(G)\rightarrow E(S_{10})$, such that $|V(f)|\geq \frac{4}{5}\cdot |V(G)|$. One may wonder whether this bound can be improved. This question is right to ask because of the recent proof of Theorem \ref{thm:KMZJCTB}. Our next result answers it.

\begin{theorem}
    \label{thm:|V|6Bound} Let $G$ be a cubic graph. Then, there is a mapping $f:E(G)\rightarrow E(S_{10})$, such that 
    \[|V(f)|\geq \frac{5}{6}\cdot |V(G)|,\]
    and for every $v\in \overline{V(f)}$ there are $e,e\in \partial_G(v)$ with $f(e)=f(e')$.
\end{theorem}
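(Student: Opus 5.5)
The plan is to reduce to Corollary \ref{cor:kBridgeColoring}. That corollary already loses at most one vertex per bridge when every bridge is trivial. So the work is (i) handling non-trivial bridges, and (ii) showing that the number $\theta$ of bridges is at most $|V(G)|/6$ once the graph has been reduced. I will argue by induction on $|V(G)|$. The base cases, such as $K_2^3$ or bridgeless graphs, follow directly from Theorem \ref{thm:KMZJCTB}: composing an $S_4$-coloring with the embedding of $S_4$ into $S_{10}$ given by the labels $a,a_1,\dots,a_4$ colors every vertex properly. I will keep the invariant from the statement, that each bad vertex sees a repeated color, because it is what lets pieces be glued.

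\textbf{Step 1: cutting at a non-trivial bridge.} Let $e=uv$ be a non-trivial bridge, and let $G_1,G_2$ be the two sides, each with at least $5$ vertices. Form cubic graphs $G_i'$ from $G_i$ by attaching to the degree-two endpoint a pendant gadget. One option is a single new vertex carrying a triangle-with-bridge end, i.e.\ adding an $S_4$-shaped end so that $e$ becomes trivial; another is to close $G_i$ up by a small cubic gadget. Apply induction to each $G_i'$. Then use the symmetries of $S_{10}$ to permute colors so that both induced colorings agree on the color of $e$, and restrict to $G$. The bridges of $S_{10}$ are $a,b,c$, and any bridge that is colored properly at both ends must receive a bridge color. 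The count works out because the gadget vertices are discarded, and only $u,v$ can become bad, which is already paid for by the gadget vertices in the ratio. I expect to have to choose the gadget size so that $\tfrac56$ survives; a gadget of exactly the right size should make the inequality $|V(f)|\ge \tfrac56(|V(G_1')|+|V(G_2')|)-(\text{gadget loss})\ge\tfrac56|V(G)|$ close.

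\textbf{Step 2: all bridges trivial.} Now every bridge $\xi$ joins a non-contractable triangle, that is, an end-block on $3$ vertices with a double edge, to a root vertex $r$. By Corollary \ref{cor:kBridgeColoring} we get $|\overline{V(f)}|\le\theta$, with the bad vertices being among the roots. The task is then a charging argument giving $6\theta\le |V(G)|$, or else a direct fix. Each bridge owns its $3$ end-block vertices. A root $r$ adjacent to two or three trivial bridges gives a component like $S_4$-minus-edge, or $S_{10}$ itself; in that case I will color it directly by $S_{10}$ (the root maps to the central vertex of $S_{10}$), so it costs nothing. A root with exactly one trivial bridge lies on a degree-two path of $G'$, and it can fail only if the $S_4$-coloring of $W'$ puts an $a$-colored edge there in a bad way. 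Here the freedom in Theorem \ref{thm:KMZJCTB} can be used: the suppressed edge $v_1v_2$ may be colored so that the root sees $\{a,\text{color},\text{color}\}$, and permuting the names among the three arms $a,b,c$ may let many roots become good.

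\textbf{Main obstacle.} The hardest step is the counting in Step 2: guaranteeing that bad roots number at most $|V(G)|/6$. A bridge contributes $3$ end-block vertices plus a root, which is only $4$ vertices per bad vertex, giving the old $\tfrac45$-type bound. I would first try to show that at most half of the roots are bad. Each path of degree-two vertices in $G'$ between $v_1$ and $v_2$ receives a single color from $W'$, and by choosing, for each such path, the assignment of the $a/b/c$ arm to its roots (when the path has several roots, alternate the arms), all but at most one root per path should be made properly colored. A single-root path yields a bad root, charged to $3+1$ of its own vertices plus the two endpoints $v_1,v_2$, shared at most ... this is where the $6$ should come from. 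If this charging fails, the fallback is to choose the $S_4$-coloring of $W'$ among all such colorings to minimize the bad roots, and then exchange along bicolored paths as in Lemma \ref{lemma uncolored}.
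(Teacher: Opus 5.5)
Your proposal has a genuine gap at each of its two steps. In both cases the missing piece is an idea, not just detail.

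\textbf{Step 2 (all bridges trivial).} You leave the decisive count open, and it cannot be completed inside the framework of Corollary~\ref{cor:kBridgeColoring}. Take any bridgeless cubic graph $W$ on $m$ vertices, subdivide every edge once, and hang a trivial end-block at each subdivision vertex. Then $|V(G)|=7m$ and $\theta=\frac{3m}{2}$, and every degree-two path of $G'$ has exactly one root $r$. As long as the vertices of $W$ keep their colors from an $S_4$-coloring of $W'=W$, both path edges at $r$ must get the color of the suppressed edge. So every root is bad, and $|\overline{V(f)}|=\frac{3}{14}|V(G)|>\frac16|V(G)|$. Your charging indeed yields only $4+\frac23$ vertices per bad root. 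Even on longer paths the alternation breaks next to an edge colored $a_3$ or $a_4$, since no vertex star of $S_{10}$ contains such an edge together with a bridge color.

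The missing idea is the paper's complementary estimate, which has two parts.
\begin{itemize}
\item First contract contractable triangles. If $v_T$ is good, give each edge of $T$ the color of its opposite edge. If $v_T$ sees $x,x,w$, color the two edges of $T$ opposite the $x$-edges by $y$ and the third edge by $z$, where $\{x,y,z\}$ is a vertex star. Either way $|V(f)|=|V(g)|+2$, so the ratio does not drop, and $G'$ (obtained by deleting the end-blocks) becomes triangle-free.
\item Then take a maximum partial edge-coloring of $G'$ with the bridge colors $a,b,c$ of $S_{10}$. Give each bridge a bridge color missing at its root, copy the end-blocks from $S_{10}$, and give each uncolored edge the color missing at one of its ends. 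Only one end of each uncolored edge is bad. By Lemma~\ref{lemma uncolored} the associated odd cycles are vertex-disjoint, and they have length at least $5$. Hence $|\overline{V(f)}|\le\frac{|V(G)|-3t}{5}$, where $t$ is the number of trivial bridges. In the example above $G'$ is bipartite and nothing is lost.
\end{itemize}
With all bridges trivial, balancing this against $|\overline{V(f)}|\le\theta$ gives $|\overline{V(f)}|\le\frac{|V(G)|}{8}$. This is the paper's Case~1 versus Corollary dichotomy. Note that the first estimate does not see non-trivial bridges at all.

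\textbf{Step 1 (non-trivial bridges).} This fails for a more basic reason: the plain $\frac56$ hypothesis is not additive under cutting at a bridge. Suppose you attach a gadget of $g\ge1$ vertices on each side. Take the best case: the gadgets are entirely good, so the gadget bridges receive bridge colors and an automorphism of $S_{10}$ matches the two sides. Even then, the bad vertices of the glued coloring are exactly those of $f_1$ and $f_2$. Induction bounds these only by $\frac{|V(G)|+2g}{6}$, which exceeds $\frac{|V(G)|}{6}$ by $\frac{g}{3}$. The gadget vertices enlarge the allowance rather than pay for $u$ and $v$, and no choice of gadget fixes this. You need an invariant that is additive under the split.

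The paper attempts this with $|\overline{V(f)}|\le\frac{|V(G)|-6c}{4}$, where $c$ is the number of bridges. This is additive because $|V(G_1)|+|V(G_2)|=|V(G)|+6$ and $c_1+c_2=c+1$. Be aware, however, that this bound is negative as soon as $6c>|V(G)|$, e.g.\ for $S_{10}$, or for a long chain of digons joined by bridges. So it cannot serve as an induction hypothesis as it stands, and the case of many non-trivial bridges still needs a different argument.
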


\begin{proof} We prove the theorem by induction on the number of vertices. If $|V(G)|= 2$, we can take an arbitrary vertex $w$ of $S_{10}$, and color the three edges of $G$ with three edges incident to $w$. It is trivial to see that this coloring satisfies the condition of the theorem. Now, assume that the statement of the theorem holds for all cubic graphs with $|V(G)|<n$. Let us consider an arbitrary cubic graph $G$ containing $n\geq 4$ vertices. Clearly, we can assume that $G$ is connected.

Let us show that all triangles in $G$ are non-contractable. Suppose, $G$ contains a contractable triangle $T$. Consider the cubic graph $H=G/T$, and let $v_T$ be the vertex of $H$ obtained by contracting $T$. Since $H$ contains $n-2$ vertices, we have that there is a mapping $g:E(H)\rightarrow E(S_{10})$, such that
\begin{equation*}
|V(g)|\geq \frac{5}{6}\cdot |V(H)|,
\end{equation*}and for any $v \in \overline{V(g)}$ there are two edges $e, e' \in \partial_H(v)$, such that $g(e)=g(e')$. We consider two cases.

\medskip

Case A: $v_T\in V(g)$.

\medskip

There is a vertex $s\in V(S_{10})$, such that $g(\partial_H(v))=\partial_{S_{10}}(s)$. Let $\partial_{S_{10}}(s)=\{\alpha, \beta, \gamma \}$. Consider a mapping $f:E(G)\rightarrow E(S_{10})$, obtained from $g$ as follows: color the edges of $T$ with a color from $\{\alpha, \beta, \gamma \}$, such that its end-vertices are not incident to an edge with that color. Observe that
\begin{equation*}
|V(f)|=|V(g)|+2, \text{ and } |V(G)|=|V(H)|+2,
\end{equation*}hence
\begin{equation*}
\frac{|V(f)|}{|V(G)|}=\frac{|V(g)|+2}{|V(H)|+2}\geq \frac{|V(g)|}{|V(H)|}\geq \frac{5}{6},
\end{equation*}and therefore,
\begin{equation*}
|V(f)|\geq \frac{5}{6}\cdot |V(G)|.
\end{equation*}

\medskip

Case B: $v_T\notin V(g)$.

\medskip

There are two edges $e, e' \in \partial_H(v_T)$, such that $g(e)=g(e')$. Let $x=g(e)$, and let $y$ and $z$ be two edges of $S_{10}$ that are incident to the same end-vertex of $x$ in $S$. Consider a mapping $f:E(G)\rightarrow E(S_{10})$, obtained from $g$ as follows: color the edges of $T$ that are opposite to the edges with color $x$ by $y$, and color the remaining third edge of $T$ with $z$. Observe that
\begin{equation*}
|V(f)|=|V(g)|+2, \text{ and } |V(G)|=|V(H)|+2,
\end{equation*}hence
\begin{equation*}
\frac{|V(f)|}{|V(G)|}=\frac{|V(g)|+2}{|V(H)|+2}\geq \frac{|V(g)|}{|V(H)|}\geq \frac{5}{6},
\end{equation*}and therefore,
\begin{equation*}
|V(f)|\geq \frac{5}{6}\cdot |V(G)|.
\end{equation*} Moreover, for each vertex $w \notin V(f)$, there are two edges $h, h' \in \partial_G(w)$, such that $f(h)=f(h')$.

\medskip

Thus, we can assume that all triangles of $G$ are non-contractable. Let $t$ be the number of trivial bridges of $G$, and $k$ be the number of non-trivial bridges of $G$. Let $G'$ be the subgraph of $G$ obtained from $G$ by removing the vertices of all $t$ non-contractable triangles of $G$. Notice that $G'$ is triangle free with maximum degree at most three.  
Now, consider the following cases.

\medskip

\textit{Case 1}: $|V(G')|\leq \frac{5}{6}\cdot |V(G)|$.

\medskip

Consider a coloring $f$ by using the edges $a,b,$ and $c$ of $S_{10}$ (see Figure \ref{SylvGraph}) and such that the number of uncolored edges is smallest. We extend this coloring to that of $G$ as follows. Let $T'$ be a non-contractable triangle of $G$. Color the edges of $T'$ with the corresponding edges of the end-block of $S_{10}$. Clearly, the bridges receive a color $d \in \{a,b,c\}$. Thus, the edges of the component of $T'$ that does not contain the root are colored with the corresponding ones in $S_{10}$ in such a way to preserve the adjacency. Since $G'$ is triangle free, every cycle associated with an uncolored edge has length at least $5$. Moreover, by Lemma~\ref{lemma uncolored}, these cycles are vertex-disjoint. Hence:
\[|\overline{V(f)}|\leq \frac{|V(G')|}{5}\leq \frac{|V(G)|}{6},\]
and therefore,
\begin{equation*}
|V(f)|\geq \frac{5}{6}\cdot |V(G)|,
\end{equation*}
and the coloring we get satisfies the property: for every $v\in \overline{V(f)}$ there are $e,e\in \partial_G(v)$ with $f(e)=f(e')$.

\medskip

\textit{Case 2}: $|V(G')|\geq \frac{5}{6}\cdot |V(G)|$.

\medskip

Since 
\[|V(G)|=|V(G')|+3\cdot t\geq \frac{5}{6}\cdot |V(G)|+3\cdot t,\]
we have
\[t\leq \frac{|V(G)|}{18}.\]

We will partition this case into two subcases.

\medskip

Case 2.1: $k\leq \frac{|V(G)|}{18}$. Note that by Corollary \ref{cor:kBridgeColoring} we have that there is a mapping $f:E(G)\rightarrow E(S_{10})$ such that 
\[|\overline{V(f)}|\leq t+2\cdot k,\]
and the coloring we get satisfies: for every $v\in \overline{V(f)}$ there are $e,e\in \partial_G(v)$ with $f(e)=f(e')$.
Hence
\[|\overline{V(f)}|\leq t+2\cdot k\leq \frac{|V(G)|}{18}+2\cdot \frac{|V(G)|}{18}=\frac{|V(G)|}{6}.\]
Therefore,
\begin{equation*}
|V(f)|\geq \frac{5}{6}\cdot |V(G)|.
\end{equation*}

\medskip

Case 2.2: $k\geq \frac{|V(G)|}{18}$. We overcome this case by showing that there is a mapping $f:E(G)\rightarrow E(S_{10})$ with
\[|\overline{V(f)}|\leq \frac{|V(G)|-6\cdot k}{4},\]
and the mapping we get satisfies for every $v\in \overline{V(f)}$ there are $e,e\in \partial_G(v)$ with $f(e)=f(e')$.
If this is true then
\[|\overline{V(f)}|\leq \frac{|V(G)|-6\cdot k}{4}\leq \frac{|V(G)|-6\cdot \frac{|V(G)|}{18}}{4}=\frac{|V(G)|}{6}.\]
Therefore,
\begin{equation*}
|V(f)|\geq \frac{5}{6}\cdot |V(G)|.
\end{equation*}

Thus, it suffices to show that there is a mapping $f:E(G)\rightarrow E(S_{10})$ with
\[|\overline{V(f)}|\leq \frac{|V(G)|-6\cdot k}{4},\]
and the mapping we get satisfies: for every $v\in \overline{V(f)}$ there are $e,e\in \partial_G(v)$ with $f(e)=f(e')$. We will prove something stronger: there is a mapping $f:E(G)\rightarrow E(S_{10})$ with
\[|\overline{V(f)}|\leq \frac{|V(G)|-6\cdot c}{4},\]
and the mapping we get satisfies: for every $v\in \overline{V(f)}$ there are $e,e\in \partial_G(v)$ with $f(e)=f(e')$. Here $c=t+k$ is the number of bridges of $G$. Since $k\leq c$, this will imply the desired inequality.

If $c=0$, then $G$ is bridgeless, hence by Theorem \ref{thm:KMZJCTB} there is a coloring $f$ with $\overline{V(f)}=\emptyset$. Thus, our statement is trivial. Assume that all bridges of $G$ are trivial. Let $c=t$ be their number. We consider two cases:

\medskip

Case 2.2.A: $t\geq \frac{|V|}{10}$. Then:

\[|V(G)|=|V(G')|+3\cdot t\geq |V(G')|+3\cdot \frac{|V(G)|}{10},\]
or
\[|V(G')|\leq \frac{7|V(G)|}{10},\]
or as we overcame Case 1:
\[|\overline{V(f)}|\leq \frac{|V(G')|}{5}\leq \frac{7|V(G)|}{50}<\frac{|V(G)|}{7}.\]
Therefore,
\begin{equation*}
|V(f)|> \frac{6}{7}\cdot |V(G)|> \frac{5}{6}\cdot |V(G)|,
\end{equation*}
or,
\begin{equation*}
|V(f)|> \frac{5}{6}\cdot |V(G)|.
\end{equation*}
Moreover, as before for every $v\in \overline{V(f)}$ there are $e,e\in \partial_G(v)$ with $f(e)=f(e')$.

\medskip

Case 2.2.B: $t\leq \frac{|V|}{10}$. Then, by Corollary \ref{cor:kBridgeColoring},
\[|\overline{V(f)}|\leq c=t\leq \frac{|V(G)|-6\cdot t}{4},\]
which is our inequality, and note that as before for every $v\in \overline{V(f)}$ there are $e,e\in \partial_G(v)$ with $f(e)=f(e')$. Thus, we can assume that we have at least one non-trivial bridge, that is, $k\geq 1$.



Now, consider a graph $G$ with a non-trivial bridge $e=uv$. Consider two cubic graphs $G_1$ and $G_2$ obtained from $G-e$ by attaching a copy of $S_4$ to $u$ and $v$, respectively. Note that
\[|V(G_1)|+|V(G_2)|=|V(G)|+6,\]
and
\[c_1+c_2=c+1.\]
Here $c_1$ and $c_2$ denote the number of bridges of $G_1$ and $G_2$, respectively. Since $e$ is a non-trivial bridge, we have that $|V(G_1)|, |V(G_2)|<n$. Thus, by induction, we have that there are corresponding mappings $f_1$ and $f_2$ of $G_1$ and $G_2$, respectively. Note that since $e=uv$ is a bridge, we have that $u\in \overline{V(f_1)}$, and $v\in \overline{V(f_2)}$. Since roots of our bridges belong to $\overline{V(f_1)}$ and $ \overline{V(f_2)}$, we have
\[|\overline{V(f)}|\leq |\overline{V(f_1)}|+|\overline{V(f_2)}|. \]
Thus,
\begin{align*}
    |\overline{V(f)}| &\leq |\overline{V(f_1)}|+|\overline{V(f_2)}|\leq \frac{|V(G_1)|-6\cdot c_1}{4}+\frac{|V(G_2)|-6\cdot c_2}{4}\\
    &=\frac{|V(G_1)|+|V(G_2)|-6\cdot (c_1+c_2)}{4}=\frac{|V(G)|+6-6(c+1)}{6}=\frac{|V(G)|-6\cdot c}{4}.
\end{align*}

    \end{proof}

\section{Conclusion and future work}

In this paper, we considered the problem of finding a lower bound for $|V(f)|$ when one tries to color edges of every cubic graph with edges of $S_{10}$, or edges of every cubic graph with a perfect matching with edges of $S_{12}$. We provided the first lower bound for the second case, and we improved the bound for $S_{10}$ from $\frac{4}{5}$ (obtained in \cite{AuJC2018}) to $\frac{5}{6}$. The second improvement heavily relies on Theorem \ref{thm:KMZJCTB} that is proved in \cite{KMZ22}, recently. 

There are some questions that worth further investigation. First, the proof of our $\frac{4}{5}$-bound does not rely on Theorem \ref{thm:KMZJCTB}, hence it is quite plausible that one can improve it by trying to use this strong result. Next, it would be very interesting to improve our present $\frac{5}{6}$ bound.

Since there are cubic graphs with perfect matchings that do not admit $S_{10}$-colorings and $S_{12}$-colorings (see \cite{WolfAMC2026}), it would be interesting to prove best possible lower bounds for $|V(f)|$ in these two classes.

Let $P_{12}$ be the bridgeless cubic graph obtained from $P_{10}$ by replacing one of its vertices with a triangle. The paper \cite{HM2019} predicts that if a connected cubic graph $H$ does not admit a $P_{12}$-coloring, then the problem of deciding whether a given cubic graph admits an $H$-coloring is polynomial-time solvable. It is highly desirable to verify this conjecture when $H=S_6$. Since all bridgeless cubic graphs admit an $S_6$-coloring, it may be possible to use Theorem \ref{thm:KMZJCTB} to show that checking $S_6$-colorability is solvable by a polynomial time algorithm. As \cite{KMZ22} argues, $S_6$-colorability is equivalent to checking whether a given cubic graph admits a pair of perfect matchings $F_1$ and $F_2$, such that $G-F_1-F_2$ is bipartite.

\section*{Acknowledgement} The second author would like to thank Hrant Khachatryan for suggesting him to work with trivial/non-trivial bridges in cubic graphs.


\section*{References}
\bibliographystyle{elsarticle-num}

\end{document}